\begin{document}
\maketitle

\section{Introduction}
The Mittag-Leffler function is an entire function defined by a power series
\begin{equation*}
E_\rho(z)=\sum_{n=0}^{\infty}\frac{z^n}{\Gamma(1+n/\rho)}, \quad \rho>0,\quad z\in\C,
\end{equation*}
where $\Gamma(x)$ is the gamma function. This function was introduced by Mittag-Leffler in a number of papers \cite{Mittag-Leffler1900,Mittag-Leffler1901,Mittag-Leffler1901a,Mittag-Leffler1902,Mittag-Leffler1905,Mittag-Leffler1920} published between 1902 and 1905 in connection with the development of  his method for summing divergent series. The function $E_{\rho}(z)$ is also called the one-parameter Mittag-Leffler function.  Replacing one in a gamma function argument with an arbitrary parameter $\mu\in\C$ leads to the generalization of the function $E_\rho(z)$ in the case with two parameters
\begin{equation}\label{eq:MLF_gen}
  E_{\rho,\mu}(z)=\sum_{n=0}^{\infty}\frac{z^n}{\Gamma(\mu+n/\rho)},\quad \rho>0,\quad \mu\in\C,\quad z\in\C,
\end{equation}
which is called the two-parameter Mittag-Leffler function. This generalization was obtained in the works by A. Wiman in 1905 \cite{Wiman1905,Wiman1905a} got further development in the works by Humbert and Agarwal \cite{zbMATH03082752,zbMATH03078845,zbMATH03081895} and in the works by M. M. Dzhrbashyan \cite{Dzhrbashian1954b_eng,Dzhrbashyan1954_eng} (see also \cite{Dzhrbashyan1966_eng}, Chapter 3, \S2, 4 ). As we can see, the function $E_{\rho,\mu}(z)$ is connected with $E_\rho(z)$  by the ratio $E_{\rho,1}(z)=E_{\rho}(z)$.

Great interest in the Mittag-Leffler function is primarily shown due to the use of this function to solve differential equations expressed in terms of fractional derivatives. In such problems, the Mittag-Leffler function acts as an eigenfunction of the fractional differentiation operators.  The Mittag-Leffler function is also used to describe the processes of anomalous diffusion \cite{Uchaikin2009, UCHAIKIN2008, Cahoy2010}, in probability theory and mathematical statistics \cite{Korolev2017, Bening2006,Saenko2020c}, as well as in many other fields of science, where differential equations appear in fractional derivatives. In this regard, a lot of attention has been paid to the study of the analytical properties of the Mittag-Leffler function. The main properties of the Mittag-Leffler function were elucidated in the book \cite{Bateman_V3_1955}. A more detailed study of analytic and asymptotic properties of the Mittag-Leffler function was given in the book \cite{Dzhrbashyan1966_eng}. In this book, the integral representation of the Mittag-Leffler function was obtained, expressed in terms of the contour integral. The reader can find more detailed information on the Mittag-Leffler function and its properties in the book \cite{Gorenflo2014} and in review papers  \cite{Diethelm2010,Mathai2008,Gorenflo2019,Popov2013,Rogosin2015}.

The integral representation of the function $E_{\rho,\mu}(z)$ considered in this paper expresses its value in terms of the contour integral. The integral representation of the Mittag-Leffler function is used to calculate the value of this function \cite{Gorenflo2002b,Seybold2009,Haubold2011,Parovik2012_eng}, it gives an opportunity to study the asymptotic behavior of the Mittag-Leffler function \cite{Dzhrbashyan1966_eng}, and distribution of its zeros \cite{Popov2013}. The latter turns out to be important in the theory of the integral Fourier-Laplace transforms with Mittag-Leffler kernel. The integral representation also turns out to be convenient when performing integral transformations in which the Mittag-Leffler function acts as the kernel. For example, in the work \cite{Saenko2020c} the inverse Fourier transform of the characteristic function of fractional-stable distribution was performed, which is expressed through the function $E_\rho(z)$. As a result, the density and distribution function of this probability law were obtained.

There are several forms of writing the integral representation of the function $E_{\rho,\mu}(z)$. Each of these forms differs one from another when taking account of additional properties of the Hankel contour and transforming the integrand. One of the first forms of notation is given in the book  \cite{Bateman_V3_1955} (see \S18.1)
\begin{equation}\label{eq:MLF_int_Bateman}
  E_{\alpha,\beta}(z)=\frac{1}{2\pi i}\int_{\gamma(\varepsilon)}\frac{t^{\alpha-\beta}e^t}{t^\alpha-z}dt,\quad\alpha>0,\quad\beta>0.
\end{equation}
Here the contour $\gamma(\varepsilon)$ represents a self-loop consisting of a half-line $S_1=\{t: \arg t=-\pi, |t|\geqslant\varepsilon\}$, an arc of a circle $C_\varepsilon=\{t: -\pi\leqslant\arg t\leqslant\pi,   |t|=\varepsilon >|z|^{1/\alpha}\} $  and a half-line $ S_2=\{t: \arg t=\pi, |t|\geqslant\varepsilon\}$. The contour $\gamma(\varepsilon)$ is traversed in positive direction. The parameters $\alpha$ and $\beta$ of this representation are connected with the parameters $\rho$ and $\mu$ of the formula (\ref{eq:MLF_gen}) by the relations: $\alpha=1/\rho, \beta=\mu$.

To study the asymptotic properties of the Mittag-Leffler function in the work \cite{Dzhrbashyan1954_eng}  an integral representation of this function was obtained in a more general form \begin{equation}\label{eq:MLF_int_Dzh54}
  E_{\rho,\mu}(z)=\frac{\rho}{2\pi i}\int_{\gamma(\varepsilon,\theta)}\frac{\tau^{\rho(1-\mu)}\exp\{\tau^\rho\}}{\tau-z}d\tau.
\end{equation}
Here the contour of integration $\gamma(\varepsilon,\theta)$ is composed of a half-line $S_1=\{\tau: \arg \tau=-\theta, |\tau|\geqslant\varepsilon\}$, an arc of a circle $C_\varepsilon=\{ \tau: -\theta\leqslant \arg \tau\leqslant\theta, |\tau|=\varepsilon\}$ and a half-line $ S_2=\{\tau: \arg \tau=\theta, |\tau|\geqslant\varepsilon\}$, where $\varepsilon$ is an arbitrary  real number satisfying the condition  $\varepsilon>|z|$, and the parameter  $\theta$ satisfies the conditions: $\frac{\pi}{2\rho}<\theta<\pi$, if $1/2<\rho\leqslant1$ and $\frac{\pi}{2\rho}<\theta<\frac{\pi}{\rho}$, if $\rho>1$. The main difference of the representation (\ref{eq:MLF_int_Dzh54}) from the representation (\ref{eq:MLF_int_Bateman}) consists in the contour of integration. The contour of integration $\gamma(\varepsilon,\theta)$ takes account of the fact that the half-lines $S_1$ and $S_2$ of the Hankel contour in the integral representation for the Euler gamma function can be positioned at an arbitrary angle $\theta$ satisfying the condition $\frac{\pi}{2}<\theta<\pi$ (see for example \cite{Markushevich_V2_1965_eng}, p. 324 or \cite{Markushevich_V2_1968_eng}, p. 315). The integrand (\ref{eq:MLF_int_Dzh54}) is obtained from the integrand (\ref{eq:MLF_int_Bateman}) due to the replacement of the integration variable $\tau=t^\alpha$. Further, representation (\ref{eq:MLF_int_Dzh54}) was included in the monograph \cite{Dzhrbashyan1966_eng} in some changed form.

Further generalization of the integral representation of the Mittag-Leffler function was obtained in the paper \cite{Popov2013}. To this end, the authors showed that the Hankel contour in the integral representation of the Euler gamma function admits further generalization. It has been shown that the half-lines $S_1$ and $S_2$ of the Hankel contours can come out at arbitrary angles $\theta_1$ and $\theta_2$ satisfying the conditions $\theta_1\in (-3\pi/2,-\pi/2)$ and $\theta_2\in (\pi/2,3\pi/2)$ (see lemma~1.1.1 in \cite{Popov2013}). As a result, the integral representation of the function  $E_{\rho,\mu}(z)$ can be written in the form
\begin{equation*}
  E_{\rho,\mu}(z)=\frac{1}{2\pi i}\int_{\gamma(\varepsilon,\theta_1,\theta_2)}\frac{t^{1/\rho-\mu}e^t}{t^{1/\rho}-z}dt,
\end{equation*}
where the contour $\gamma(\varepsilon,\theta_1,\theta_2)$ is a self-loop consisting of the half-line $S_1=\{t: \arg t=\theta_1, |t|\geqslant\varepsilon\}$, the arc of the circle $C_\varepsilon=\{t: \theta_1\leqslant \arg t\leqslant\theta_2,\ |t|=\varepsilon\}$ and the half-line  $S_2=\{t: \arg t=\theta_2,\ |t|\geqslant\varepsilon\}$. The parameters $\varepsilon, \theta_1, \theta_2$ must satisfy the conditions $\varepsilon>|z|^\rho$, $\theta_1\in (-3\pi/2,-\pi/2)$ and $\theta_2\in (\pi/2,3\pi/2)$ (see theorem~1.1.1 in \cite{Popov2013}).

It is possible to extend the generalization of the integral representation of the Mittag-Leffler function if we take account of some additional properties of the Hankel contour. This work is precisely aimed at obtaining such an integral representation.

\section{ Integral representation of the gamma function}

Let us show that the Hankel contour in the integral representation of the gamma function can be rotated by an arbitrary angle $\psi$ satisfying a condition which will be determined below. At the same time, the value of the gamma function does not depend on the angle $\psi$.  This property allows one to add additional functionality when using the integral representation of the gamma function. In particular, it gives an opportunity to orient the integration contour on the complex plane in the required way, which in some cases turns out to be very useful. This property will be used to obtain the integral representation of the Mittag-Leffler function.

\begin{figure}
  \centering
  \includegraphics[width=0.5\textwidth]{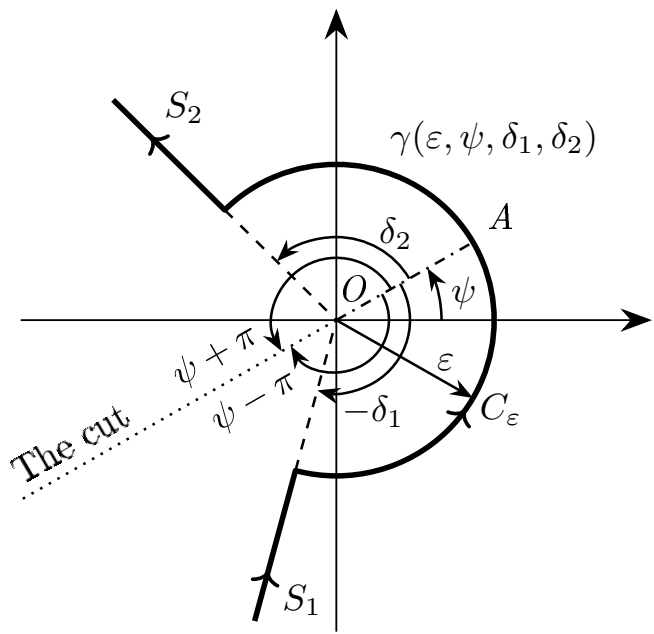}
  \caption{Contour of integration $\gamma(\varepsilon,\psi,\delta_1,\delta_2)$}\label{fig:loop_d1d2psi}
\end{figure}

Consider the contour $\gamma(\varepsilon,\psi,\delta_1,\delta_2)$ consisting of the half-line $S_1$, the arc of the circle $C_\varepsilon$ radius $\varepsilon$ and half-line $S_2$.  Let us assume that the contour is rotated by the angle $\psi$ relative to the origin of coordinates (Fig.~\ref{fig:loop_d1d2psi}). At the same time the rotation of the contour by the angle $\psi$ means that the cut of the complex plane will be rotated by the same angle.  Let the axis that determines the rotation of the contour be an extension of the cut line and makes one straight line with the cut. In Fig.~\ref{fig:loop_d1d2psi} this axis is designated as $OA$. Thus, the lower bank of the cut goes along the half-line $\arg t=\psi-\pi$, and the upper bank of the cut along the half-line $\arg t=\psi+\pi$. Let the angles $\delta_1$ and $\delta_2$ in the common case be not equal to one another and their values lie in the ranges $\pi/2<\delta_1\leqslant\pi$ and $\pi/2<\delta_2\leqslant\pi$. We come to an agreement that these angles are counted from the axis $OA$.
As a result, the following lemma turns out to be true
\begin{lemma}\label{lemma_GF}
For any real $\varepsilon, \delta_1, \delta_2, \psi$, meeting the conditions $\varepsilon>0$, $\pi/2<\delta_1\leqslant\pi$, $\pi/2<\delta_2\leqslant\pi$,
\begin{equation}\label{eq:psiCond}
 \pi/2-\delta_2<\psi<-\pi/2+\delta_1
\end{equation}
any $s\in\C$ the following representation for the gamma function is true
\begin{equation}\label{eq:GammaMy}
  \frac{1}{\Gamma(s)}=\frac{1}{2\pi i}\int_{\gamma(\varepsilon,\psi,\delta_1,\delta_2)} e^tt^{-s}dt,
\end{equation}
where the contour $\gamma(\varepsilon,\psi,\delta_1,\delta_2)$ has the form (see~Fig.~\ref{fig:loop_d1d2psi})
\begin{equation}\label{eq:loop_gamma_d1d2psi}
  \gamma(\varepsilon,\psi,\delta_1,\delta_2)=\left\{\begin{array}{l}
                 S_1=\{t:\ \arg t=-\delta_1+\psi,\ |t|\geqslant\varepsilon\}, \\
                 C_\varepsilon=\{t:\ -\delta_1+\psi\leqslant\arg t\leqslant \delta_2+\psi,|t|=\varepsilon\}, \\
                 S_2=\{t:\ \arg t=\delta_2+\psi,|t|\geqslant\varepsilon\}.
               \end{array}\right.
\end{equation}
\end{lemma}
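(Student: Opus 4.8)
The plan is to reduce the claimed formula to the classical Hankel representation $\frac{1}{\Gamma(s)}=\frac{1}{2\pi i}\int_{Ha}e^t t^{-s}\,dt$, where $Ha$ is the standard Hankel contour with rays at angles $\pm\pi$ (or, more flexibly, at angles $-\delta$ and $+\delta$ for any fixed $\delta\in(\pi/2,\pi]$, a fact already recalled in the introduction). Thus I would start by fixing the branch of $t^{-s}=e^{-s\log t}$ determined by the cut along the ray $\arg t=\psi\pm\pi$, so that on the contour $\gamma(\varepsilon,\psi,\delta_1,\delta_2)$ the argument of $t$ ranges continuously over $[-\delta_1+\psi,\ \delta_2+\psi]\subset(\psi-\pi,\psi+\pi)$; the hypothesis \eqref{eq:psiCond}, i.e. $\pi/2-\delta_2<\psi<-\pi/2+\delta_1$, is exactly what guarantees this inclusion together with $-\delta_1+\psi<-\pi/2$ and $\delta_2+\psi>\pi/2$, so the rays $S_1,S_2$ stay in the open left half-plane directions needed for convergence of $\int e^t t^{-s}dt$.

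Next I would show the integral in \eqref{eq:GammaMy} is independent of the admissible parameters. Fix $\varepsilon$ and $\psi$; for two choices $(\delta_1,\delta_2)$ and $(\delta_1',\delta_2')$ both satisfying the hypotheses, the two contours together with two connecting circular arcs on $|t|=R$ bound a region in which $e^t t^{-s}$ is holomorphic (the cut is the same ray $\arg t=\psi+\pi$ for both). By Cauchy's theorem the difference of the two contour integrals equals the integral over the two arcs $|t|=R$ between the respective rays; since all rays point into directions with $\mathrm{Re}\,t<0$ bounded away from the imaginary axis (again by \eqref{eq:psiCond}), on those arcs $|e^t t^{-s}|\le e^{-cR}R^{-\mathrm{Re}\,s}\cdot e^{|\mathrm{Im}\,s|\cdot\text{const}}$ with $c>0$, so the arc contributions vanish as $R\to\infty$. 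Hence the integral is independent of $\delta_1,\delta_2$. A similar deformation argument (now the cut rotates with $\psi$, but one can instead rotate the integration variable $t\mapsto e^{i(\psi-\psi')}t$, which maps one contour to the other and leaves $\int e^t t^{-s}dt$ formally of the same shape) shows independence of $\psi$ within the admissible range — here one must check that the rotation does not sweep the integrand across its cut, which is precisely ensured by keeping $-\delta_1+\psi,\delta_2+\psi$ strictly between $-\pi/2$... actually strictly inside $(\psi-\pi,\psi+\pi)$, automatic since $\delta_1,\delta_2\le\pi$.

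Having established that the value is independent of $(\psi,\delta_1,\delta_2)$, I would evaluate it at the convenient choice $\psi=0$, $\delta_1=\delta_2=\delta$ with $\pi/2<\delta\le\pi$; this is exactly the generalized Hankel contour for which the identity $\frac{1}{\Gamma(s)}=\frac{1}{2\pi i}\int e^t t^{-s}dt$ is classical (cited in the paper from Markushevich). For completeness one can recover it for $\mathrm{Re}\,s<1$ by collapsing $\varepsilon\to 0$ and using Euler's reflection formula together with $\int_0^\infty e^{-u}u^{-s}du=\Gamma(1-s)$, then extending to all $s\in\mathbb{C}$ by analytic continuation since both sides are entire in $s$ (the contour integral converges locally uniformly in $s$ because the rays are in the open left half-plane).

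The main obstacle is the bookkeeping of the branch cut under rotation: one must verify that for every $\psi$ satisfying \eqref{eq:psiCond} the whole contour $\gamma(\varepsilon,\psi,\delta_1,\delta_2)$ lies in the plane cut along $\arg t=\psi+\pi$ and that the homotopy between two admissible $\psi$'s never forces the contour across that moving cut — equivalently, that $-\delta_1+\psi>\psi-\pi$ and $\delta_2+\psi<\psi+\pi$, which hold because $\delta_1,\delta_2\le\pi$, while the decay of the arc integrals at infinity requires $-\delta_1+\psi<-\pi/2$ and $\delta_2+\psi>\pi/2$, which is the content of \eqref{eq:psiCond}. Once these inequalities are laid out cleanly, the rest is a routine Cauchy-theorem deformation plus the classical evaluation.
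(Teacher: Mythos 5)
Your overall strategy is sound and ultimately rests on the same mechanism as the paper's proof: Cauchy's theorem with connecting arcs at $|t|=R$ that vanish because, under (\ref{eq:psiCond}), both rays point into directions with $\cos(\arg t)<0$; Euler's reflection formula to evaluate the resulting ray integrals; and analytic continuation in $s$, both sides of (\ref{eq:GammaMy}) being entire. The packaging differs: the paper does not first prove parameter-independence and then invoke the classical case, but joins each ray of $\gamma(\varepsilon,\psi,\delta_1,\delta_2)$ to the ray $\arg t=\pm\pi$ by an auxiliary closed contour ($C_1$, $C_2$), obtains $\int_0^{\infty\cdot e^{i(\delta_2+\psi)}}e^tt^{s-1}dt=e^{i\pi s}\Gamma(s)$ and $\int_0^{\infty\cdot e^{i(-\delta_1+\psi)}}e^tt^{s-1}dt=e^{-i\pi s}\Gamma(s)$ for $\Re s>0$, takes their difference to get $2i\sin(\pi s)\Gamma(s)=2\pi i/\Gamma(1-s)$, and then replaces $s$ by $1-s$. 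Your ``independence first, then classical evaluation'' organization is legitimate and in one respect cleaner: the independence step keeps $\varepsilon$ fixed, so it imposes no restriction on $\Re s$; only the final collapse $\varepsilon\to0$ on the reference contour needs $\Re s<1$ before continuation.

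One step, however, is wrong as stated and must be repaired: for independence in $\psi$ you propose the substitution $t\mapsto e^{i(\psi-\psi')}t$, claiming it leaves $\int e^tt^{-s}dt$ ``formally of the same shape''. It does not: it turns $e^{t}$ into $e^{\lambda t}$ with $\lambda=e^{i(\psi-\psi')}$ and introduces a factor $\lambda^{1-s}$ --- precisely the reason the paper needs a separate statement (corollary~\ref{coroll:GF_lambda}, formula (\ref{eq:GF_lambda})) to treat rotations by a change of variable, and that corollary is a consequence of the lemma, not available inside its proof. The fix is simply to run, for $\psi$ versus $\psi'$, the same Cauchy deformation you already use for $(\delta_1,\delta_2)$: the difference of the two truncated contour integrals is supported on the two thin angular strips $\{\varepsilon\le|t|\le R,\ \delta_2+\psi\le\arg t\le\delta_2+\psi'\}$ and $\{\varepsilon\le|t|\le R,\ -\delta_1+\psi\le\arg t\le-\delta_1+\psi'\}$; on each strip the branch of $t^{-s}$ determined by the contour's continuous argument is unambiguous (no cut is crossed, even in the boundary case $\delta_1=\pi$ or $\delta_2=\pi$, where your inclusion of the argument range in the open interval $(\psi-\pi,\psi+\pi)$ holds only non-strictly and the ray lies on a bank of the cut), and the arcs at $|t|=R$ vanish because the angles of the upper strip stay strictly inside $(\pi/2,3\pi/2)$ and those of the lower strip strictly inside $(-3\pi/2,-\pi/2)$. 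With that replacement, and with the classical generalized Hankel evaluation at $\psi=0$, $\delta_1=\delta_2=\pi$ carried out as you sketch, your argument goes through.
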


\begin{proof}
Let us consider the auxiliary integral
\begin{equation*}
  I_1=\int_{C_1} e^tt^{s-1}dt,
\end{equation*}
where the contour $C_1$ consists of the segment $\Gamma_1'$, the arc of the circle $C_R$ radius $R$, the segment $\Gamma_2'$ and the arc of the circle $C_\varepsilon$ radius $\varepsilon$ (Fig.~\ref{fig:loopC1}), defined in the complex plane $t$ in the following way
\begin{equation*}
  C_1=\left\{\begin{array}{l}
               \Gamma_1'=\{t:\ \arg t=\delta_2+\psi,\ \varepsilon\leqslant |t|\leqslant R\}, \\
               C_R=\{t:\ \delta_2+\psi\leqslant\arg t\leqslant\pi,\ |t|=R\}, \\
               \Gamma_2'=\{t:\ \arg t=\pi,\ \varepsilon\leqslant |t|\leqslant R\}, \\
               C_\varepsilon=\{t:\ \delta_2+\psi\leqslant\arg t\leqslant\pi,\ |t|=\varepsilon\}.
             \end{array}\right.
\end{equation*}
where $\pi/2<\delta_2+\psi<3\pi/2$.

\begin{figure}
    \centering
    \includegraphics[width=0.5\textwidth]{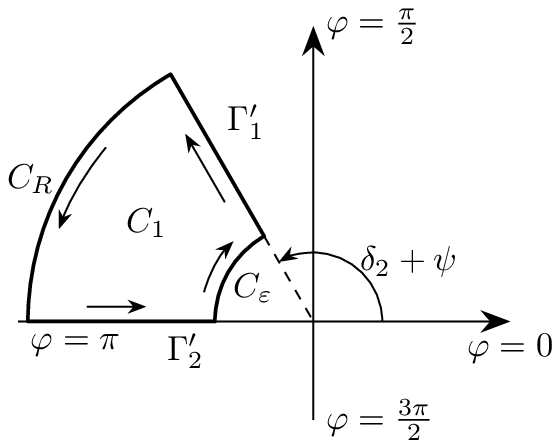}
    \caption{Auxiliary contour $C_1$ in the complex plane $t=re^{i\varphi}$. The values of $\varphi$ change in the limits from 0 to $2\pi$. }\label{fig:loopC1}
\end{figure}

Replacing the variable $t=re^{i\varphi}$  in the integral $I_1$ and directly calculating, we get
\begin{multline}\label{eq:I1'}
  I_1=\int_{\varepsilon}^{R}\exp\left\{r e^{i(\delta_2+\psi)}\right\}r^{s-1} e^{is(\delta_2+\psi)}dr +i\int_{\delta_2+\psi}^{\pi}\exp\left\{Re^{i\varphi}\right\}R^s e^{is\varphi}d\varphi \\
  +\int_{R}^{\varepsilon}\exp\left\{re^{i\pi}\right\}r^{s-1}e^{is\pi}dr
  +i\int_{\pi}^{\delta_2+\psi}\exp\left\{\varepsilon e^{i\varphi}\right\}\varepsilon^s e^{is\varphi}d\varphi\\
  =J_1'+J_2'+J_3'+J_4'.
\end{multline}
In this expression we let $R\to\infty$ and $\varepsilon\to0$. For the integral
\begin{equation}\label{eq:J4'}
  J_4'=i\int_{\pi}^{\delta_2+\psi}\exp\left\{\varepsilon e^{i\varphi}\right\}\varepsilon^s e^{is\varphi}d\varphi
\end{equation}
we have
\begin{equation*}
  \lim_{\varepsilon\to0}|J_4'|\leqslant \lim_{\varepsilon\to0} \int_{\pi}^{\delta_2+\psi}\left|\exp\left\{\varepsilon e^{i\varphi}\right\}\varepsilon^s e^{is\varphi}\right| |d\varphi|.
\end{equation*}
For the integrand we get
\begin{equation}\label{eq:J4'_lim}
\lim_{\varepsilon\to0}\exp\{\varepsilon\cos\varphi+\Re s\ln\varepsilon-\Im s\varphi\}=0,\quad \Re s>0.
\end{equation}
Thus,
\begin{equation}\label{eq:J4'cond}
  \lim_{\varepsilon\to0}|J_4'|=0,\quad\mbox{at}\quad \Re s>0.
\end{equation}
For the integral
\begin{equation}\label{eq:J2'}
  J_2'=i\int_{\delta_2+\psi}^{\pi}\exp\left\{Re^{i\varphi}\right\}R^s e^{is\varphi}d\varphi
\end{equation}
we have
\begin{equation*}
  \lim_{R\to\infty}|J_2'|\leqslant\lim_{R\to\infty}\int_{\delta_2+\psi}^{\pi}\left|\exp\left\{Re^{i\varphi}\right\}R^s e^{is\varphi}\right||d\varphi|.
\end{equation*}
For the integrand we obtain
\begin{equation}\label{eq:limRtoInfty}
  \lim_{R\to\infty}\left|\exp\{Re^{i\varphi}+s\ln R+is\varphi\}\right| =\lim_{R\to\infty}e^{R\cos\varphi}R^{\Re s} e^{\varphi\Im s}
\end{equation}
Since $e^x$ grows faster than any power of the number $x$ then $\lim_{R\to\infty}\exp\{R\cos\varphi+\Re s\ln R-\varphi\Im s \}=0$, if $\cos\varphi<0$. We take interest in the range of values $0\leqslant\varphi\leqslant2\pi$. Thus, the condition $\cos\varphi<0$ leads to the condition $\pi/2<\varphi<3\pi/2$. Here it is worth mentioning that the values $\varphi=\pi/2$ and $\varphi=3\pi/2$ are not included in this interval since at these values $\cos\varphi=0$ and for the limit  (\ref{eq:limRtoInfty}) to be equal to zero, the condition $\Re s<0$ should be met and this condition  contradicts the condition (\ref{eq:J4'cond}).

We substitute here in the inequality $\pi/2<\varphi<3\pi/2$ the lower limit of the integral integration  $J_2'$. As a result, we will get the admissible region of the value for  $\psi$
\begin{equation}\label{eq:I1_psi_cond}
  \pi/2-\delta_2<\psi<3\pi/2-\delta_2.
\end{equation}
Thus, for the integral $J_2'$ we find
\begin{equation}\label{eq:J2'cond}
  \lim_{R\to\infty}|J_2'|=0, \quad\mbox{if}\quad \pi/2-\delta_2<\psi<3\pi/2-\delta_2.
\end{equation}

Now we get back to the integral $I_1$. Consider expression (\ref{eq:I1'}) and assume that  $\varepsilon\to0$ and $R\to\infty$. Taking into consideration (\ref{eq:J4'cond}) and (\ref{eq:J2'cond}), we obtain
\begin{equation}\label{eq:I1'lim_0_infty}
  \lim_{\stackrel{\varepsilon\to0,}{R\to\infty}} I_1=\int_{0}^{\infty}\exp\left\{r e^{i(\delta_2+\psi)}\right\}r^{s-1} e^{is(\delta_2+\psi)}dr
  -\int_{0}^{\infty}\exp\left\{re^{i\pi}\right\}r^{s-1}e^{is\pi}dr,
\end{equation}
where $\Re s>0,\ \pi/2-\delta_2<\psi<3\pi/2-\delta_2$.
Taking into consideration the definition of the gamma function
\begin{equation}\label{eq:GammaFunDef}
  \Gamma(s)=\int_{0}^{\infty}e^{-x}x^{s-1}dx,\quad s\in \C,\ \Re s>0,
\end{equation}
the second summand (\ref{eq:I1'lim_0_infty}) can be written in the form
$\int_{0}^{\infty}\exp\{re^{i\pi}\}r^{s-1}e^{is\pi}dr=e^{is\pi}\Gamma(s)$. Taking into consideration that the contour $C_1$ is a closed one and the integrand does not possess poles inside this contour then according to Cauchy theorem $I_1=\int_{C_1}e^tt^{s-1}dt=0$. As a result, from (\ref{eq:I1'lim_0_infty}) we find
\begin{equation*}
  \int_{0}^{\infty}\exp\left\{r e^{i(\delta_2+\psi)}\right\}r^{s-1} e^{is(\delta_2+\psi)}dr=e^{i\pi s}\Gamma(s),
\end{equation*}
where $\Re s>0,\ \pi/2-\delta_2<\psi<3\pi/2-\delta_2$.
Returning to the complex variable $re^{i\varphi}=t$, this expression can be written in the form
\begin{equation}\label{eq:I1_final}
  \int\limits_{0}^{\infty\cdot\exp\{i(\delta_2+\psi)\}} e^tt^{s-1}dt=e^{i\pi s}\Gamma(s),\quad \Re s>0,\quad \pi/2-\delta_2<\psi<3\pi/2-\delta_2.
\end{equation}

Next, we consider the second auxiliary integral
\begin{equation*}
  I_2=\int_{C_2} e^tt^{s-1}dt,
\end{equation*}
where the contour $C_2$ consists of the segment $\Gamma_1''$, the arc of the circle $C_\varepsilon''$ radius $\varepsilon$, the segment $\Gamma_2''$ and arc of the circle $C_R''$ radius $R$ (Fig.~\ref{fig:loopC2}) defined in the complex plane  $t$ in the following way
\begin{equation*}
  C_2=\left\{\begin{array}{l}
               \Gamma_1''=\{t:\ \arg t=-\delta_1+\psi,\ \varepsilon\leqslant |t|\leqslant R\}, \\
               C_\varepsilon''=\{t:\ -\delta_1+\psi\leqslant\arg t\leqslant -\pi,\ |t|=\varepsilon\}, \\
               \Gamma_2''=\{t:\ \arg t=-\pi,\ \varepsilon\leqslant |t|\leqslant R\}, \\
               C_R''=\{t:\ \delta_2+\psi\leqslant\arg t\leqslant\pi,\ |t|=R\}.
             \end{array}\right.
\end{equation*}
where $-3\pi/2<-\delta_1+\psi<-\pi/2$.

\begin{figure}
    \centering
    \includegraphics[width=0.5\textwidth]{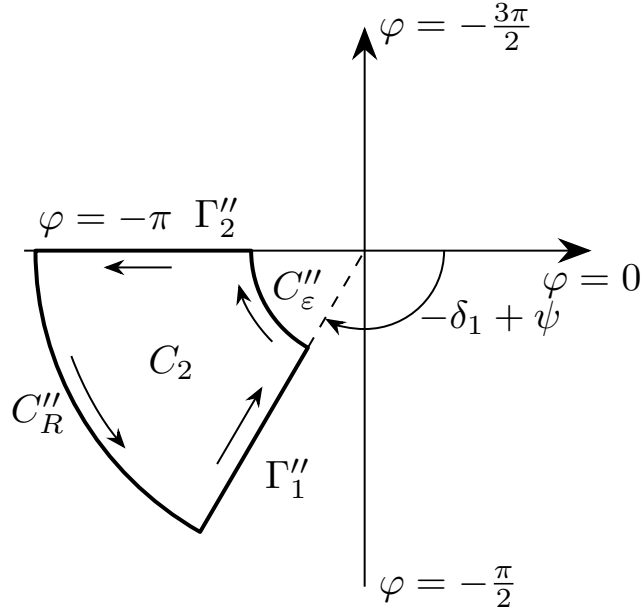}
    \caption{Auxiliary contour $C_2$ in the complex plane $t=re^{i\varphi}$. The values $\varphi$ change within the limits from 0 to $-2\pi$. }\label{fig:loopC2}
\end{figure}

Similarly to the previous case, we get
\begin{multline}\label{eq:I2'_sum}
  I_2=\int_{\varepsilon}^{R}\exp\left\{re^{-i\pi}\right\}r^{s-1}e^{-is\pi}dr
  +i\int_{-\pi}^{-\delta_1+\psi}\exp\left\{R e^{i\varphi}\right\}R^s e^{is\varphi}d\varphi\\
  +\int_{R}^{\varepsilon}\exp\left\{r e^{i(-\delta_1+\psi)}\right\}r^{s-1} e^{is(-\delta_1+\psi)}dr +i\int_{-\delta_1+\psi}^{-\pi}\exp\left\{\varepsilon e^{i\varphi}\right\}\varepsilon^s e^{is\varphi}d\varphi\\
  =J_1''+J_2''+J_3''+J_4''.
\end{multline}

Passing in the integral
\begin{equation*}
  J_4''=i\int_{-\delta_1+\psi}^{-\pi}\exp\left\{\varepsilon e^{i\varphi}\right\}\varepsilon^s e^{is\varphi}d\varphi
\end{equation*}
to the limit $\varepsilon\to0$, one can show that
\begin{equation}\label{eq:J4''cond}
  \lim_{\varepsilon\to0}|J_4''|=0,\quad\Re s>0.
\end{equation}

Next, passing in the integral
\begin{equation}\label{eq:J2''}
J_2''= i\int_{-\pi}^{-\delta_1+\psi}\exp\left\{R e^{i\varphi}\right\}R^s e^{is\varphi}d\varphi
\end{equation}
to the limit $R\to\infty$, one can show that
\begin{equation}\label{eq:J2''cond}
  \lim_{R\to\infty}|J_2''|=0,
\end{equation}
if the condition is met
\begin{equation}\label{eq:I2_psi_cond}
  -3\pi/2+\delta_1<\psi<-\pi/2+\delta_1.
\end{equation}

Now we get back to the expression (\ref{eq:I2'_sum}).  Assuming in this expression that $\varepsilon\to0$, $R\to\infty$ and taking into consideration (\ref{eq:J4''cond}) and (\ref{eq:J2''cond}) we obtain
\begin{equation}\label{eq:I2'_lim_0_infty}
  \lim_{\stackrel{\varepsilon\to0}{R\to\infty}}I_2=\int_{0}^{\infty}\exp\left\{re^{-i\pi}\right\}r^{s-1}e^{-is\pi}dr-
  \int_{0}^{\infty}\exp\left\{r e^{i(-\delta_1+\psi)}\right\}r^{s-1} e^{is(-\delta_1+\psi)}dr.
\end{equation}
where $\Re s>0$, $-3\pi/2+\delta_1<\psi<-\pi/2+\delta_1$. Using (\ref{eq:GammaFunDef}), the first summand can be represented in the form
$ \int_{0}^{\infty}\exp\left\{re^{-i\pi}\right\}r^{s-1}e^{-i\pi s}dr=e^{-i\pi s}\Gamma(s)$. Taking into consideration that the contour of integration $C_2$ is a closed contour and inside this contour there are no singular points, then according to Cauchy theorem $I_2=\int_{C_2}e^tt^{s-1}dt=0$. From (\ref{eq:I2'_lim_0_infty}) we find
\begin{equation*}
  \int_{0}^{\infty}\exp\left\{r e^{i(-\delta_1+\psi)}\right\}r^{s-1} e^{is(-\delta_1+\psi)}dr=e^{-i\pi s}\Gamma(s),
\end{equation*}
where $\Re s>0$, $-3\pi/2+\delta_1<\psi<-\pi/2+\delta_1$.
Passing in the expression to the complex variable $re^{i\varphi}=t$, we derive
\begin{equation}\label{eq:I2_final}
  \int\limits_{0}^{\infty\cdot\exp\{i(-\delta_1+\psi)\}}e^tt^{s-1}dt=e^{-i\pi s}\Gamma(s),\quad\Re s>0,\quad -3\pi/2+\delta_1<\psi<-\pi/2+\delta_1.
\end{equation}

Now we consider the auxiliary integral
\begin{equation}\label{eq:I}
  I(s)=\int\limits_{\gamma(\varepsilon, \psi, \delta_1,\delta_2)}e^tt^{s-1}dt,
\end{equation}
where the contour $\gamma(\varepsilon,\psi,\delta_1,\delta_2)$  is defined by the expression (\ref{eq:loop_gamma_d1d2psi}). By replacing in this integral the integration variable $t=re^{i\varphi}$ and calculating the integral directly we find
\begin{multline}\label{eq_Iexpand}
  I(s)=\int\limits_{\infty}^{\varepsilon}\exp\left\{re^{i(-\delta_1+\psi)}\right\}r^{s-1}e^{is(-\delta_1+\psi)}dr\\
  +i\int\limits_{-\delta_1+\psi}^{\delta+\psi}\exp\left\{\varepsilon e^{i\varphi}\right\}\varepsilon^s e^{is\varphi}d\varphi
  +\int\limits_{\varepsilon}^{\infty}\exp\left\{re^{i(\delta_2+\psi)}\right\}r^{s-1}e^{is(\delta_2+\psi)}dr.
\end{multline}

Now we consider the second summand in the right part of this expression and we pass in this summand to the limit $\varepsilon\to0$. As a result, we have
\begin{equation*}
  \lim_{\varepsilon\to0}\left|i\int\limits_{-\delta_1+\psi}^{\delta+\psi}\exp\left\{\varepsilon e^{i\varphi}\right\}\varepsilon^s e^{is\varphi}d\varphi\right|\leqslant
  \lim_{\varepsilon\to0} \int\limits_{-\delta_1+\psi}^{\delta+\psi}\left|\exp\left\{\varepsilon e^{i\varphi}\right\}\varepsilon^s e^{is\varphi}\right| |d\varphi|
\end{equation*}
Calculating the limit of the integrand, it is possible to show that
$$
\lim_{\varepsilon\to0}\left|\exp\left\{\varepsilon e^{i\varphi} + s\ln\varepsilon +is\varphi\right\}\right|=0,\quad\mbox{at}\quad\Re s>0.
$$
Thus, we obtain
\begin{equation}\label{eq:J2_lim}
  \lim_{\varepsilon\to0}\left|i\int_{-\delta_1+\psi}^{\delta+\psi}\exp\left\{\varepsilon e^{i\varphi}\right\}\varepsilon^s e^{is\varphi}d\varphi\right|=0,\quad\mbox{at}\quad\Re s>0.
\end{equation}

We return to (\ref{eq_Iexpand}) and assume that $\varepsilon\to0$. Taking into consideration (\ref{eq:J2_lim}) we obtain
\begin{multline}\label{eq:I_sum}
  \lim_{\varepsilon\to0}I(s)=\int\limits_{0}^{\infty}\exp\left\{re^{i(\delta_2+\psi)}\right\}r^{s-1}e^{is(\delta_2+\psi)}dr
  -\int\limits_{0}^{\infty}\exp\left\{re^{i(-\delta_1+\psi)}\right\}r^{s-1}e^{is(-\delta_1+\psi)}dr \\
  =\int\limits_{0}^{\infty\cdot \exp\{i(\delta_2+\psi)\}} e^t t^{s-1}dt-\int\limits_{0}^{\infty\cdot \exp\{i(-\delta_1+\psi)\}} e^t t^{s-1}dt,
\end{multline}
where we again passed to the complex variable $t$. Substituting now in (\ref{eq:I_sum}) the expressions (\ref{eq:I1_final}) and (\ref{eq:I2_final}), we get
\begin{equation*}
  I(s)=e^{is\pi}\Gamma(s)-e^{-is\pi}\Gamma(s)=2i\sin(\pi s)\Gamma(s)=\frac{2\pi i}{\Gamma(1-s)},
\end{equation*}
where the property $\Gamma(s)\Gamma(1-s)=\frac{\pi}{\sin(\pi s)}$ was used. The intersection of  the regions (\ref{eq:I1_psi_cond}) and (\ref{eq:I2_psi_cond}) gives the range of admissible values of the angle $\psi$, determined by inequality (\ref{eq:psiCond}).

Returning to (\ref{eq:I}) we get
\begin{equation}\label{eq:I_final}
  \int_{\gamma(\varepsilon,\psi,\delta_1,\delta_2)} e^tt^{s-1}dt=\frac{2\pi i}{\Gamma(1-s)},\quad \Re s>0,
\end{equation}
where the contour of integration $\gamma(\varepsilon,\psi, \delta_1,\delta_2)$ is defined by (\ref{eq:loop_gamma_d1d2psi}). As we can see, both sides of this equality are entire functions and coincide for $\Re s>0$. Therefore, by the uniqueness theorem, they will coincide on the whole complex plane. By making in (\ref{eq:I_final}) a substitution of  $1-s$ for $s$, we get
\begin{equation*}
  \frac{1}{\Gamma(s)}=\frac{1}{2\pi i}\int_{\gamma(\varepsilon,\psi,\delta_1,\delta_2)} e^tt^{-s}dt,
\end{equation*}
for any values $s\in\C$.
\begin{flushright}
  $\Box$
\end{flushright}
\end{proof}

The proved lemma shows that the contour of integration $\gamma(\varepsilon,\psi,\delta_1,\delta_2)$ in the representation (\ref{eq:GammaMy}) can be rotated by an arbitrary angle $\psi$ relative to the origin of coordinates. In Fig.~\ref{fig:loop_gammma_rot} the region of rotation is grey. The value of the gamma function does not depend on the parameters $\varepsilon,\psi,\delta_1,\delta_2$. The contour rotation angle can lie in the interval $\psi\in(\tfrac{\pi}{2}-\delta_2, -\tfrac{\pi}{2}+\delta_1)$. However, the extreme values of this interval the angle $\psi$ cannot be taken, since in this case the integral on the right-hand side (\ref{eq:GammaMy}) will diverge. Indeed, let $\psi=\pi/2-\delta_2$. In this case, the half-line $S_2$ will go along the positive part of the imaginary axis. This means that in the integral $J_2'$ (see~(\ref{eq:J2'})) the lower limit of integration  $\delta_2+\psi=\pi/2$ and the expression (\ref{eq:limRtoInfty}) at the point of the lower limit, will take the form
\begin{equation*}
    \lim_{R\to\infty}\left|\exp\{Re^{i\pi/2}+s\ln R+is\tfrac{\pi}{2}\}\right| =\lim_{R\to\infty}\exp\{\Re s\ln R+\tfrac{\pi}{2}\Im s\} =\infty
\end{equation*}
Thus, at $\psi=\pi/2-\delta_2$ the integral $J_2'$ diverges, which in its turn, leads to a divergence of the integral $I_1$ (see~(\ref{eq:I1'lim_0_infty})) and, consequently, to a divergence of  (\ref{eq:GammaMy}). If we assume that $\Re s<0$, then the this limit will be equal to zero but in this case the integral (\ref{eq:J4'}) will diverge, since the limit (\ref{eq:J4'_lim}) at $\Re s<0$ is equal to the infinity. We have a similar situation for the other extreme value $\psi=-\pi/2+\delta_1$. In this case the half-line $S_1$ goes along the negative part of the imaginary axis which leads to the integral divergence $J_2''$ (see~(\ref{eq:J2''})).

\begin{figure}
  \centering
  \includegraphics[width=0.5\textwidth]{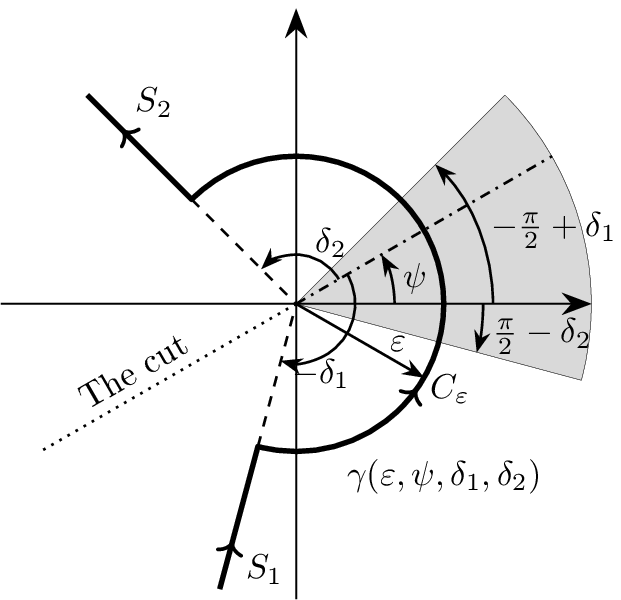}
  \caption{Contour of integration $\gamma(\varepsilon,\psi,\delta_1,\delta_2)$. The range of angle values satisfying the condition (\ref{eq:psiCond}) is shaded grey }\label{fig:loop_gammma_rot}
\end{figure}

It follows from lemma~\ref{lemma_GF} that the contour of integration $\gamma(\varepsilon,\psi,\delta_1,\delta_2)$ can be rotated within the limits of the angular sector determined by the condition (\ref{eq:psiCond}). This rotation is the rotation of the contour in the complex plane $t$ and it is not the rotation of the complex plane $t$. The limit given by the condition (\ref{eq:psiCond}) does not give an opportunity to use this formula for other values of the angles $\psi$. This circumstance limits the utility of lemma~\ref{lemma_GF}. The following corollary gives an opportunity to expand the range of admissible values of the parameter $\psi$.

\begin{corollary}\label{coroll:GF_lambda}
For any $s,\lambda\in\C$, where $\lambda\neq0$, any real $\varepsilon$, $\delta_1$, $\delta_2$, $\psi_\lambda$, that $\varepsilon>0$, $\pi/2<\delta_1\leqslant\pi$, $\pi/2<\delta_2\leqslant\pi$,
\begin{equation}\label{eq:psiLambdaCond}
\pi/2-\delta_2-\arg\lambda<\psi_\lambda<-\pi/2+\delta_1-\arg\lambda
\end{equation}
the following representation for the gamma function is valid
\begin{equation}\label{eq:GF_lambda}
  \frac{1}{\Gamma(s)}=\frac{\lambda^{1-s}}{2\pi i}\int_{\gamma_\lambda(\varepsilon,\psi_\lambda,\delta_1,\delta_2)} e^{\lambda \tau}\tau^{-s}d\tau,
\end{equation}
where the contour $\gamma_\lambda(\varepsilon,\psi_\lambda, \delta_1,\delta_2)$ has the form
\begin{equation}\label{eq:loop_gamma_psiLambda}
  \gamma_\lambda(\varepsilon,\psi_\lambda, \delta_1,\delta_2)=\left\{\begin{array}{l}
                 S_1^\lambda=\{\tau:\ \arg \tau=-\delta_1+\psi_\lambda,\ |\tau|\geqslant\varepsilon/|\lambda|\}, \\
                 C_\varepsilon^\lambda=\{\tau:\ -\delta_1+\psi_\lambda\leqslant\arg \tau\leqslant \delta_2+\psi_\lambda,|\tau|=\varepsilon/|\lambda|\}, \\
                 S_2^\lambda=\{\tau:\ \arg \tau=\delta_2+\psi_\lambda,|t|\geqslant\varepsilon/|\lambda|\}.
               \end{array}\right.
\end{equation}
\end{corollary}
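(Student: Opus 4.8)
The plan is to obtain Corollary~\ref{coroll:GF_lambda} from Lemma~\ref{lemma_GF} by the linear change of variable $t=\lambda\tau$, which is nothing but a rotation of the contour by $\arg\lambda$ together with a rescaling by $|\lambda|$. First I would fix a determination of $\arg\lambda$ and start from the representation \eqref{eq:GammaMy}, valid for every admissible $\psi$, namely $\pi/2-\delta_2<\psi<-\pi/2+\delta_1$. The map $\tau\mapsto t=\lambda\tau$ is a conformal, orientation-preserving bijection of $\C\setminus\{0\}$ onto itself that sends a point with $\arg\tau=\beta$, $|\tau|=\varrho$ to the point with $\arg t=\beta+\arg\lambda$, $|t|=|\lambda|\varrho$. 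Setting $\psi_\lambda:=\psi-\arg\lambda$, it therefore carries the contour $\gamma_\lambda(\varepsilon,\psi_\lambda,\delta_1,\delta_2)$ of \eqref{eq:loop_gamma_psiLambda} exactly onto $\gamma(\varepsilon,\psi,\delta_1,\delta_2)$, with the sense of traversal preserved: the rays $\arg\tau=-\delta_1+\psi_\lambda$ and $\arg\tau=\delta_2+\psi_\lambda$ go to $\arg t=-\delta_1+\psi$ and $\arg t=\delta_2+\psi$, while $|\tau|\geqslant\varepsilon/|\lambda|$ goes to $|t|\geqslant\varepsilon$.

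Next I would transform the integrand. Substituting $t=\lambda\tau$, $dt=\lambda\,d\tau$, gives $e^t=e^{\lambda\tau}$ and $t^{-s}=(\lambda\tau)^{-s}$, and the only point requiring care is the factorization $(\lambda\tau)^{-s}=\lambda^{-s}\tau^{-s}$. In Lemma~\ref{lemma_GF} the power $t^{-s}$ is understood on the plane cut along $\arg t=\psi\pm\pi$, and on $\gamma$ one has $\arg t\in[-\delta_1+\psi,\,\delta_2+\psi]\subset(\psi-\pi,\psi+\pi)$ since $\delta_1,\delta_2\leqslant\pi$; correspondingly $\tau^{-s}$ on $\gamma_\lambda$ is taken on the plane cut along $\arg\tau=\psi_\lambda\pm\pi$, with $\arg\tau\in(\psi_\lambda-\pi,\psi_\lambda+\pi)$ on the contour. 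Hence along the contour $\arg t=\arg\tau+\arg\lambda$ holds with no $2\pi i$ ambiguity, so $\ln t=\ln\tau+\ln\lambda$ and $t^{-s}=\lambda^{-s}\tau^{-s}$. Collecting the three factors $\lambda^{-s}$, $\tau^{-s}$ and the Jacobian $\lambda$ produces $\lambda^{1-s}$ out front, which is precisely \eqref{eq:GF_lambda}:
\[
\frac{1}{\Gamma(s)}=\frac{1}{2\pi i}\int_{\gamma_\lambda(\varepsilon,\psi_\lambda,\delta_1,\delta_2)}e^{\lambda\tau}\lambda^{-s}\tau^{-s}\lambda\,d\tau=\frac{\lambda^{1-s}}{2\pi i}\int_{\gamma_\lambda(\varepsilon,\psi_\lambda,\delta_1,\delta_2)}e^{\lambda\tau}\tau^{-s}\,d\tau.
\]

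It remains to translate the admissibility range. Writing $\psi=\psi_\lambda+\arg\lambda$, the inequality $\pi/2-\delta_2<\psi<-\pi/2+\delta_1$ of Lemma~\ref{lemma_GF} becomes $\pi/2-\delta_2-\arg\lambda<\psi_\lambda<-\pi/2+\delta_1-\arg\lambda$, i.e.\ condition~\eqref{eq:psiLambdaCond}; and since \eqref{eq:GammaMy} holds for all $s\in\C$, so does the resulting identity. I expect the main obstacle to be the branch bookkeeping in the second paragraph: one must be sure that rotating the contour rotates the branch cut with it, so that neither $\arg t$ nor $\arg\tau$ ever crosses a cut and the factorization $(\lambda\tau)^{-s}=\lambda^{-s}\tau^{-s}$ is legitimate — and it is exactly the hypotheses $\delta_1\leqslant\pi$, $\delta_2\leqslant\pi$ that guarantee this. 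One could alternatively re-run the proof of Lemma~\ref{lemma_GF} verbatim with $e^{\lambda\tau}$ in place of $e^t$, the convergence of the arcs at infinity now requiring $\Re(\lambda\tau)<0$ and hence shifting every angular condition by $-\arg\lambda$; but the change of variable above is shorter.
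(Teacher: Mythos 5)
Your proof is correct and follows essentially the same route as the paper: the substitution $t=\lambda\tau$ in the representation \eqref{eq:GammaMy}, tracking the image of the contour via $|\tau|=|t|/|\lambda|$, $\arg\tau=\arg t-\arg\lambda$, introducing $\psi_\lambda=\psi-\arg\lambda$, and shifting the angular condition \eqref{eq:psiCond} into \eqref{eq:psiLambdaCond}. Your explicit branch-cut bookkeeping justifying $(\lambda\tau)^{-s}=\lambda^{-s}\tau^{-s}$ is a point the paper passes over silently, but it does not change the argument.
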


\begin{proof}
We will make in (\ref{eq:GammaMy}) a substitution for the integration variable $t=\lambda\tau$, where $\lambda\in\C$ we have
\begin{equation*}
  \frac{1}{\Gamma(s)}=\frac{\lambda^{1-s}}{2\pi i}\int_{\gamma_\lambda
  (\varepsilon,\psi_\lambda, \delta_1,\delta_2)}e^{\lambda\tau}\tau^{-s}d\tau,
\end{equation*}
Now we consider how the contour of integration is transformed  $\gamma(\varepsilon,\psi,\delta_1,\delta_2)$. From the equality $t=\lambda\tau$ it follows \begin{equation*}
  \tau=\frac{t}{\lambda}=\frac{|t|}{|\lambda|}\exp\{i(\arg t-\arg\lambda)\}\ \Rightarrow\
  |\tau|=\frac{|t|}{|\lambda|},\ \arg\tau=\arg t-\arg\lambda
\end{equation*}
From here we obtain that the half-line $S_1$ of the contour $\gamma(\varepsilon,\psi,\delta_1,\delta_2)$ maps into the half-line $S_1^\lambda=\{\tau:\ \arg\tau=-\delta_1+\psi-\arg\lambda,\ |\tau|\geqslant \varepsilon   /|\lambda|\}$, the arc of the circle  $C_\varepsilon$ maps into the arc of the circle $C_\varepsilon^\lambda=\{\tau:\ \delta_1+\psi-\arg\lambda\leqslant\arg\tau\leqslant\delta_2+\psi-\arg\lambda,\ |\tau|=\varepsilon/|\lambda|\}$, and the half-line $S_2$ maps into the half-line $S_2^\lambda=\{\tau:\ \arg\tau=\delta_2+\psi-\arg\lambda,\ |\tau|\geqslant \varepsilon/|\lambda|\}$. The condition (\ref{eq:psiCond}) takes the form
\begin{equation*}
  \pi/2-\delta_2-\arg\lambda<\psi-\arg\lambda<-\pi/2+\delta_1-\arg\lambda.
\end{equation*}

We will introduce the notation $\psi_\lambda=\psi-\arg\lambda$. As a result, we find that the contour  $\gamma(\varepsilon,\psi,\delta_1,\delta_2)$ maps into the contour
\begin{equation*}
  \gamma_\lambda(\varepsilon, \psi_\lambda, \delta_1,\delta_2)=\left\{\begin{array}{l}
                 S_1^\lambda=\{\tau:\ \arg \tau=-\delta_1+\psi_\lambda,\ |t|\geqslant\varepsilon/|\lambda|\}, \\
                 C_\varepsilon^\lambda=\{\tau:\ -\delta_1+\psi_\lambda\leqslant\arg \tau\leqslant \delta_2+\psi_\lambda,|\tau|=\varepsilon/|\lambda|\}, \\
                 S_2^\lambda=\{\tau:\ \arg t=\delta_2+\psi_\lambda,|t|\geqslant\varepsilon/|\lambda|\}.
               \end{array}\right.
\end{equation*}
where $\pi/2-\delta_2-\arg\lambda<\psi_\lambda<-\pi/2+\delta_1-\arg\lambda.$
\begin{flushright}
  $\Box$
\end{flushright}
\end{proof}

As we can see, the substitution $t=\lambda\tau$ is the conform mapping of a complex plane $t$ into the plane $\tau$, which is the rotation of the complex plane $t$ by the angle $\arg\lambda$ and stretching this plane by the value $|\lambda|$. If $\arg\lambda>0$ the plane $t$ is rotated clockwise and at $\arg\lambda<0$ -- counterclockwise, at $|\lambda|>1$ the compression of the plane $t$ takes place, and at $|\lambda|<1$ -- stretching of the plane $t$. Here the only one restraint $\lambda\neq0$ is imposed on values $\lambda$. Without loss of generality, we can choose $|\lambda|=1$. In this case, the mapping will be the rotation of the plane as a whole by an angle $\arg\lambda$. As a result, \emph{the rotation of the integration contour of the gamma function can be represented as a sum of two independent rotations: the rotation of the complex plane as a whole by an angle $\arg\lambda$ and rotation of the integration contour on the complex plane by an angle $\psi_\lambda$.} The value of the angle $\psi_\lambda$ can be chosen arbitrarily only if it could satisfy the condition (\ref{eq:psiLambdaCond}). At the same time, rotating by an angle $\psi_\lambda$ is precisely the rotation of the contour of integration on the complex plane and does not lead to any transformation of the complex plane itself. Thus, choosing in a certain way the value $\arg\lambda$ one can map the angular sector of the plane $t$, determined by the condition (\ref{eq:psiCond}), into the required range of values $\arg\tau$  of the plane $\tau$.

\section{Integral representation of the Mittag-Leffler function}

Let us now return to the Mittag-Leffler function and get an integral representation for this function. When deriving the integral representation, we will use corollary~\ref{coroll:GF_lambda}. Using this corollary turns out to be useful and allows getting  a more general form of the integral representation of the Mittag-Leffler function. As a result, the following theorem is true.

\begin{theorem}\label{lemm:MLF_int}
For any real $\rho, \delta_{1\rho}, \delta_{2\rho}, \epsilon$ that $\rho>1/2$, $\frac{\pi}{2\rho}<\delta_{1\rho}\leqslant\min(\pi,\pi/\rho)$, $\frac{\pi}{2\rho}<\delta_{2\rho}\leqslant\min(\pi,\pi/\rho)$, $\epsilon>0$, any $\mu\in\C$ and  any $z\in\C$ that
\begin{equation}\label{eq:z_cond_lemm}
  \frac{\pi}{2\rho}-\delta_{2\rho}+\pi<\arg z<-\frac{\pi}{2\rho}+\delta_{1\rho}+\pi
\end{equation}
the Mittag-Leffler function can be represented in the form
\begin{equation}\label{eq:MLF_int}
  E_{\rho,\mu}(z)=\frac{\rho}{2\pi i} \int_{\gamma_\zeta(\epsilon,\arg z,\delta_{1\rho},\delta_{2\rho})}\frac{\exp\left\{(z\zeta)^{\rho}\right\}(z\zeta)^{\rho(1-\mu)}}{\zeta-1}d\zeta.
\end{equation}
where the contour of integration $\gamma_\zeta$ has the form (Fig.~\ref{fig:loop_gammaZeta})
\begin{equation}\label{eq:loop_gammaZeta}
  \gamma_\zeta(\epsilon,\arg z,\delta_{1\rho},\delta_{2\rho})=\left\{\begin{array}{ll}
                       S_1=&\{\zeta: \arg\zeta=-\delta_{1\rho}-\pi,\quad |\zeta|\geqslant 1+\epsilon\},\\
                       C_\epsilon=&\{\zeta: -\delta_{1\rho}-\pi\leqslant\arg\zeta\leqslant\delta_{2\rho}- \pi,\quad |\zeta|=1+\epsilon\},\\
                       S_2=&\{\zeta: \arg\zeta=\delta_{2\rho}-\pi,\quad|\zeta|\geqslant1+\epsilon\}.
                     \end{array}\right.
\end{equation}
\end{theorem}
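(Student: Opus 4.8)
The plan is to start from the defining series (\ref{eq:MLF_gen}), replace every reciprocal gamma factor by the contour integral provided by Corollary~\ref{coroll:GF_lambda}, interchange summation and integration, sum the resulting geometric series, and finally apply the change of variable $\tau=\zeta^{\rho}$ which converts the auxiliary Hankel-type contour into $\gamma_\zeta$. Throughout one keeps $z\neq0$, which is anyway forced by (\ref{eq:z_cond_lemm}); the value $z=0$ is not covered by the stated formula and would be treated separately via $E_{\rho,\mu}(0)=1/\Gamma(\mu)$.

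First I would apply Corollary~\ref{coroll:GF_lambda} to $1/\Gamma(\mu+n/\rho)$ with the specific choices $\lambda=z^{\rho}$, $\psi_\lambda=-\rho\pi$, $\delta_1=\rho\,\delta_{1\rho}$, $\delta_2=\rho\,\delta_{2\rho}$, and $\varepsilon=(|z|(1+\epsilon))^{\rho}$. The hypotheses $\rho>1/2$ and $\pi/(2\rho)<\delta_{i\rho}\leqslant\min(\pi,\pi/\rho)$ are exactly what is needed to guarantee $\pi/2<\delta_i\leqslant\pi$, so the corollary is applicable; moreover a direct substitution shows that the admissibility range (\ref{eq:psiLambdaCond}) for $\psi_\lambda=-\rho\pi$ with $\arg\lambda=\rho\arg z$ coincides with the condition (\ref{eq:z_cond_lemm}) on $\arg z$, and that the contour radius $\varepsilon/|\lambda|$ equals $(1+\epsilon)^{\rho}>1$. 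Since $\lambda^{1-s}=z^{\rho(1-\mu)}z^{-n}$ for $s=\mu+n/\rho$, the factor $z^{n}$ coming from the series cancels, and summing the geometric series $\sum_{n\geqslant0}\tau^{-n/\rho}=\tau^{1/\rho}/(\tau^{1/\rho}-1)$ — legitimate because on the contour $|\tau|\geqslant(1+\epsilon)^{\rho}$, hence $|\tau^{-1/\rho}|\leqslant(1+\epsilon)^{-1}<1$, while $|e^{z^{\rho}\tau}\tau^{-\mu}|$ is integrable along the rays of $\gamma_\lambda$, which is precisely the exponential decay established inside the proof of Lemma~\ref{lemma_GF}, so Fubini's theorem applies — yields
\begin{equation*}
  E_{\rho,\mu}(z)=\frac{z^{\rho(1-\mu)}}{2\pi i}\int_{\gamma_\lambda(\varepsilon,\psi_\lambda,\delta_1,\delta_2)} \frac{e^{z^{\rho}\tau}\,\tau^{-\mu}\,\tau^{1/\rho}}{\tau^{1/\rho}-1}\,d\tau .
\end{equation*}

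The substitution $\tau=\zeta^{\rho}$ then finishes the computation: $d\tau=\rho\zeta^{\rho-1}d\zeta$, $\tau^{1/\rho}=\zeta$, $e^{z^{\rho}\tau}=e^{(z\zeta)^{\rho}}$, and $z^{\rho(1-\mu)}\tau^{-\mu}\zeta\,\zeta^{\rho-1}=(z\zeta)^{\rho(1-\mu)}$, so the integrand becomes $\rho\,e^{(z\zeta)^{\rho}}(z\zeta)^{\rho(1-\mu)}/(\zeta-1)$; at the same time the ray $\arg\tau=-\delta_1+\psi_\lambda$ becomes $\arg\zeta=(-\rho\delta_{1\rho}-\rho\pi)/\rho=-\delta_{1\rho}-\pi$, the ray $\arg\tau=\delta_2+\psi_\lambda$ becomes $\arg\zeta=\delta_{2\rho}-\pi$, and the circle of radius $(1+\epsilon)^{\rho}$ becomes the circle of radius $1+\epsilon$, i.e. exactly the contour $\gamma_\zeta$ of (\ref{eq:loop_gammaZeta}); this produces (\ref{eq:MLF_int}).

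The step I expect to be the main obstacle is the branch bookkeeping hidden in the two substitutions $\lambda=z^{\rho}$ and $\tau=\zeta^{\rho}$. One has to verify that along the whole contour $\arg z+\arg\zeta$ stays within a single determination strip, so that $z^{\rho}\zeta^{\rho}=(z\zeta)^{\rho}$ and $z^{\rho(1-\mu)}\zeta^{\rho(1-\mu)}=(z\zeta)^{\rho(1-\mu)}$ hold with the branches intended in (\ref{eq:MLF_int}), and that the rotated contour $\gamma_\lambda$ attached to $\lambda=z^{\rho}$ — which, by the remark following Corollary~\ref{coroll:GF_lambda}, is the superposition of a rotation of the plane by $\arg\lambda=\rho\arg z$ and a rotation of the contour by $\psi_\lambda=-\rho\pi$ — is genuinely carried onto $\gamma_\zeta$ by $\tau=\zeta^{\rho}$ (note that $\delta_{1\rho}+\delta_{2\rho}\leqslant 2\min(\pi,\pi/\rho)$ forces the $\arg\tau$-range to have length at most $2\pi$, so the branch of $\tau^{1/\rho}$ is unambiguous). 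Tracking the arguments explicitly along the rays and the arc, as was done for the gamma function in the proof of Lemma~\ref{lemma_GF}, rather than manipulating the multivalued powers formally, is the safe way to carry this out; it is also precisely the place where the sharpened constraint $\delta_{i\rho}\leqslant\min(\pi,\pi/\rho)$ — and not merely $\delta_{i\rho}\leqslant\pi/\rho$ — is used.
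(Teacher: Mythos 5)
Your proposal is correct and follows essentially the same route as the paper: replace each $1/\Gamma(\mu+n/\rho)$ by the rotated Hankel integral of Corollary~\ref{coroll:GF_lambda}, tie the orientation of the contour to $\arg z$ so that the geometric series converges on a contour of radius exceeding $1$ and condition (\ref{eq:z_cond_lemm}) emerges from (\ref{eq:psiLambdaCond}), and finish with a power-type change of variable that carries the contour onto $\gamma_\zeta$. The only differences are bookkeeping: you load $\arg z$ into $\arg\lambda=\rho\arg z$ with $\psi_\lambda=-\rho\pi$ fixed and substitute $\tau=\zeta^\rho$ after summation, whereas the paper fixes $\lambda=e^{-i\rho\pi}$, puts $\arg z$ into the contour-rotation parameter $\psi_{\rho\lambda}=\arg z$, performs the substitution $\tau=u^\rho$ before summing the series, and ends with $u=\zeta z e^{i\pi}$ --- an equivalent reparametrization, with the branch tracking you flag handled there by following the arguments along the contour at each step.
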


\begin{figure}
  \centering
  \includegraphics[width=0.4\textwidth]{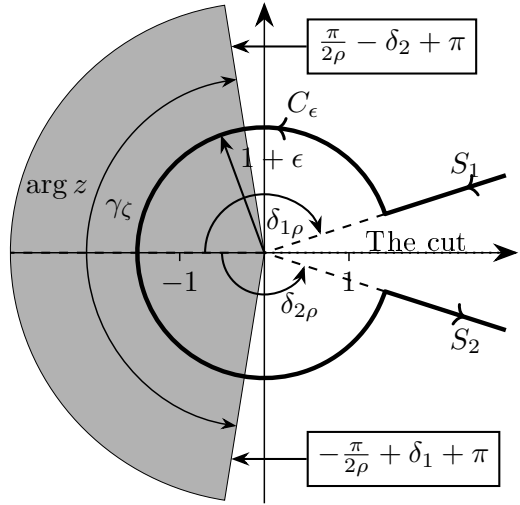}
  \caption{The contour of integration $\gamma_\zeta$. The region corresponding to the condition (\ref{eq:z_cond_lemm}) is shaded grey}\label{fig:loop_gammaZeta}
\end{figure}

\begin{proof}
We will use Corollary~\ref{coroll:GF_lambda}. By making a substitution in (\ref{eq:GF_lambda}) of the integration variable $\tau=u^\rho$, we consider how the contour of integration is $\gamma_\lambda(\varepsilon,\psi_\lambda,\delta_1,\delta_2)$ is transformed. As a result, we have
\begin{equation}\label{eq:trasform_u^rho}
  u=\tau^{1/\rho}=|\tau|^{1/\rho}\exp\{i (1/\rho) \arg\tau\}\ \Rightarrow\ |u|=|\tau|^{1/\rho},\ \arg u=(1/\rho)\arg\tau.
\end{equation}
In the complex plane $\tau$ the range of admissible values for the angle $\delta_{1}$ is determined by the angular sector $\pi/2<\delta_1\leqslant\pi$  with the span angle $\pi/2$. Using (\ref{eq:trasform_u^rho}) we obtain that the angular sector is mapped into the angular sector $\pi/(2\rho)<\delta_{1\rho}\leqslant\pi/\rho$ in the plane $u$ with the span angle $\pi/(2\rho)$,  where $\delta_{1\rho}=\delta_1/\rho$. As we can see at $1/2<\rho<1$ the span angle of this sector turns out to be more than $\pi/2$. That is why, we limit the range of admissible values of the angle  $\delta_{1\rho}$ in such a way that the right-hand boundary of this sector should not exceed $\pi$. Thus, at $\rho<1$ we have $\pi/(2\rho)<\delta_{1\rho}\leqslant\pi$. At $\rho\geqslant1$ the span angle of the sector is less than $\pi/2$ and therefore the right-hand boundary of this sector for any  $\rho\geqslant1$ will be less than $\pi$. Joining these two cases we get
\begin{equation*}
  \pi/(2\rho)<\delta_{1\rho}\leqslant\min(\pi,\pi/\rho).
\end{equation*}
Similarly, for the angle  $\delta_2$ the condition $\pi/2<\delta_2\leqslant\pi$ is transformed into the condition $\pi/(2\rho)<\delta_{2\rho}\leqslant\min(\pi,\pi/\rho)$. It should be pointed out that if we do not limit the maximal values of the angles $\delta_{1\rho}$ and $\delta_{2\rho}$ by the value of $\pi$, then the sum $\delta_{1\rho}+\delta_{2\rho}$ may be more than $2\pi$. Thus, the half-lines $S_1$ or $S_2$, of the integration contour can intersect the cut of the complex plane and go to another sheet of the Riemann surface. Hence, the meaning of this restriction becomes clear:  with such a constraint on the values of $\delta_{1\rho}$ and $\delta_{2\rho}$, we always remain on the same sheet of the Riemann surface.

Using (\ref{eq:trasform_u^rho}) we obtain that the half-line $S_1^\lambda$ of the contour $\gamma_\lambda(\varepsilon,\psi_\lambda,\delta_1,\delta_2)$ (see~(\ref{eq:loop_gamma_psiLambda})) maps into the half-line $S_{1\rho}=\{u:\ \arg u= -\delta_{1\rho}+\psi_{\rho\lambda},\ |u|\geqslant\varepsilon^{1/\rho}/|\lambda|\}$, where $\psi_{\rho\lambda}=\psi_\lambda/\rho$. The arc of the circle $C_\varepsilon^\lambda$ maps into the arc of the circle $C_{\varepsilon\rho}=\{u:\ -\delta_{1\rho}+\psi_{\rho\lambda}\leqslant\arg u\leqslant\delta_{2\rho}+\psi_{\rho\lambda},\  |u|=\varepsilon^{1/\rho}/|\lambda|\}$, and the half-line $S_2^\lambda$ into the half-line   $S_{2\rho}=\{u:\ \arg u= \delta_{2\rho}+\psi_{\rho\lambda},\ |u|\geqslant\varepsilon^{1/\rho}/|\lambda|\}$. Thus, the contour of integration $\gamma_\lambda(\varepsilon,\psi_\lambda,\delta_1,\delta_2)$ maps into the contour
\begin{equation*}
  \gamma_\rho(\varepsilon,\psi_{\rho\lambda},\delta_{1\rho},\delta_{2\rho})=\left\{\begin{array}{l}
                 S_{1\rho}=\{u:\ \arg u=-\delta_{1\rho}+\psi_{\rho\lambda},\ |u|\geqslant\varepsilon^{1/\rho
                 }/|\lambda|\}, \\
                 C_{\varepsilon\rho}=\{u:\ -\delta_{1\rho}+\psi_{\rho\lambda}\leqslant\arg u\leqslant\delta_{2\rho}+\psi_{\rho\lambda},\  |u|=\varepsilon^{1/\rho}/|\lambda|\}, \\
                 S_{2\rho}=\{u:\ \arg u= \delta_{2\rho}+\psi_{\rho\lambda},\ |u|\geqslant\varepsilon^{1/\rho}/|\lambda|\},
               \end{array}\right.
\end{equation*}
where $\pi/(2\rho)<\delta_{1\rho}\leqslant\min(\pi,\pi/\rho)$, $\pi/(2\rho)<\delta_{2\rho}\leqslant\min(\pi,\pi/\rho)$,
    $\tfrac{\pi}{2\rho}-\delta_{2\rho}-\tfrac{1}{\rho}\arg\lambda< \psi_{\rho\lambda}< -\tfrac{\pi}{2\rho}+\delta_{1\rho}-\tfrac{1}{\rho}\arg\lambda$.

Now making a substitution in the integral (\ref{eq:GF_lambda}) of the integration variable $t=u^\rho$, we obtain
\begin{equation*}
  \frac{1}{\Gamma(s)}=\frac{\rho\lambda^{1-s}}{2\pi i}\int_{\gamma_\rho(\varepsilon,\psi_{\rho\lambda},\delta_{1\rho},\delta_{2\rho})}e^{\lambda u^\rho} u^{\rho(1-s)-1}d u.
\end{equation*}

By substituting this expression in the Mittag-Leffler (\ref{eq:MLF_gen}), we get
\begin{multline}\label{eq:MLF_tmp1}
  E_{\rho,\mu}(z)=\sum_{k=0}^{\infty}\frac{z^k}{\Gamma(\mu+k/\rho)}=\sum_{k=0}^{\infty}z^k \frac{\rho\lambda^{1-\mu-\frac{k}{\rho}}}{2\pi i}\int_{\gamma_\rho(\varepsilon,\psi_{\rho\lambda},\delta_{1\rho},\delta_{2\rho})}e^{\lambda u^\rho} u^{\rho(1-\mu-\frac{k}{\rho})-1} du \\
  =\frac{\rho\lambda^{\frac{1}{\rho}}}{2\pi i}\int_{\gamma_\rho(\varepsilon,\psi_{\rho\lambda},\delta_{1\rho},\delta_{2\rho})}e^{\left(\lambda^{1/\rho} u\right)^\rho} \left(\lambda^{1/\rho} u\right)^{\rho(1-\mu)-1}\sum_{k=0}^{\infty}\left(\frac{z}{\lambda^{1/\rho} u}\right)^k d u.
\end{multline}

Since $\varepsilon$ is arbitrary, we choose it so that $\varepsilon>|z|/|\lambda|^{1/\rho}$. Hence it follows that
\begin{equation}\label{eq:uz_cond}
  \sup_{u\in \gamma_\rho(\varepsilon,\psi_{\rho\lambda},\delta_{1\rho},\delta_{2\rho})}\left|\frac{z}{\lambda^{1/\rho} u}\right|<1.
\end{equation}
Using the geometric progression formula for the sum under the integral sign in (\ref{eq:MLF_tmp1}) we have
\begin{equation*}
  \sum_{k=0}^{\infty}\left(\frac{z}{\lambda^{1/\rho}u} \right)^k = \lim_{n\to\infty}\sum_{k=0}^{n}\left(\frac{z}{\lambda^{1/\rho}u}\right)^k = \lim_{n\to\infty}\frac{1-\left(\frac{z}{\lambda^{1/\rho}u}\right)^n}{1-\frac{z}{\lambda^{1/\rho}u}}= \frac{1}{1-\frac{z}{\lambda^{1/\rho}u}}.
\end{equation*}
Substituting now this result into (\ref{eq:MLF_tmp1}), we obtain
\begin{equation}\label{eq:MLF_tmp2}
  E_{\rho,\mu}(z)=\frac{\rho\lambda^{1/\rho}}{2\pi i}\int_{\gamma_\rho(\varepsilon,\psi_{\rho\lambda},\delta_{1\rho},\delta_{2\rho})} \frac{\exp\left\{\left(\lambda^{1/\rho}u\right)^\rho\right\}\left(\lambda^{1/\rho}u\right)^{\rho(1-\mu)-1}} {1-\frac{z}{\lambda^{1/\rho}u}}d u.
\end{equation}

We restrict ourselves here to considering the case
\begin{equation}\label{eq:argZ_cond}
  \pi/2<\arg z<3\pi/2
\end{equation}
and will choose $\arg\lambda=-\rho\pi$. This choice of the value $\arg\lambda$ means the rotation of the contour $\gamma_\rho(\varepsilon,\psi_{\rho\lambda},\delta_{1\rho},\delta_{2\rho})$ by the angle $-\pi$ counterclockwise. Indeed, as we can see from (\ref{eq:trasform_u^rho}) due to the substitution  of a variable $t=u^\rho$ the rotation angle corresponding to $\arg\lambda$ in the plane $t$ maps on the plane  $u$ into the angle $\tfrac{1}{\rho}\arg\lambda$. As a result, to rotate the contour in the plane $u$ by the angle $-\pi$, it is necessary to choose $\arg\lambda=-\rho\pi$.  Thus, the contour $\gamma_\rho(\varepsilon,\psi_{\rho\lambda},\delta_{1\rho},\delta_{2\rho})$ takes the form (see~Fig.~\ref{fig:loop_gammaRho_rhoPi})
\begin{equation*}
  \gamma_\rho(\varepsilon,\psi_{\rho\lambda},\delta_{1\rho},\delta_{2\rho})=\left\{\begin{array}{l}
                 S_{1\rho}=\{u:\ \arg u=-\delta_{1\rho}+\psi_{\rho\lambda},\ |u|\geqslant\varepsilon^{1/\rho
                 }/|\lambda|\}, \\
                 C_{\varepsilon\rho}=\{u:\ -\delta_{1\rho}+\psi_{\rho\lambda}\leqslant\arg u\leqslant\delta_{2\rho}+\psi_{\rho\lambda},\  |u|=\varepsilon^{1/\rho}/|\lambda|\}, \\
                 S_{2\rho}=\{u:\ \arg u= \delta_{2\rho}+\psi_{\rho\lambda},\ |u|\geqslant\varepsilon^{1/\rho}/|\lambda|\},
               \end{array}\right.
\end{equation*}
where $\pi/(2\rho)<\delta_{1\rho}\leqslant\min(\pi,\pi/\rho)$, $\pi/(2\rho)<\delta_{2\rho}\leqslant\min(\pi,\pi/\rho)$ and
\begin{equation}\label{eq:psi_rhoPi_cond}
\tfrac{\pi}{2\rho}-\delta_{2\rho}+\pi< \psi_{\rho\lambda}< -\tfrac{\pi}{2\rho}+\delta_{1\rho}+\pi.
\end{equation}

\begin{figure}
  \centering
  \includegraphics[width=0.49\textwidth]{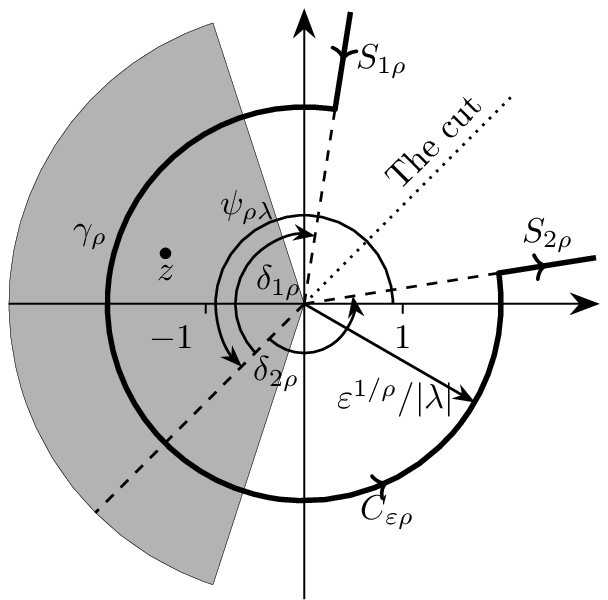}\hfill
  \includegraphics[width=0.49\textwidth]{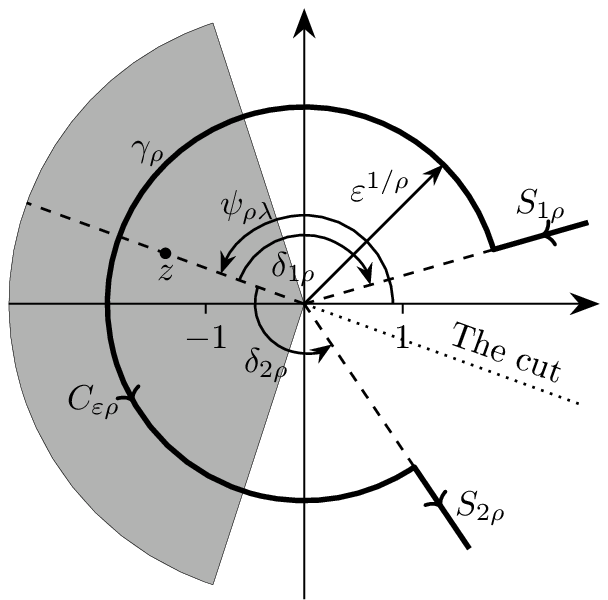}
  \parbox[t]{0.475\textwidth}{\caption{Contour $\gamma_\rho(\varepsilon,\psi_{\rho\lambda},\delta_{1\rho},\delta_{2\rho})$ at $\arg\lambda=-\rho\pi$. The angular region corresponding to the condition (\ref{eq:psi_rhoPi_cond}) is shaded grey }\label{fig:loop_gammaRho_rhoPi}}\hfill
  \parbox[t]{0.475\textwidth}{\caption{Contour $\gamma_\rho(\varepsilon,\psi_{\rho\lambda},\delta_{1\rho},\delta_{2\rho})$ at $\arg\lambda=-\rho\pi$ and $\psi_{\rho\lambda}=\arg z$. The angular region corresponding to the condition (\ref{eq:z_cond}) is grey}\label{fig:loop_gammaRho_rhoPi_argZ}}
\end{figure}

As a result, we have rotated the contour $\gamma_\rho(\varepsilon,\psi_{\rho\lambda},\delta_{1\rho},\delta_{2\rho})$ thus, for the angular region (\ref{eq:psi_rhoPi_cond}) to intersect the angular region (\ref{eq:argZ_cond}). Taking into consideration that $\psi_{\rho\lambda}$ can take arbitrary values satisfying the condition (\ref{eq:psi_rhoPi_cond}). We will use this arbitrariness and choose
\begin{equation}\label{eq:psiL=argZ}
  \psi_{\rho\lambda}=\arg z.
\end{equation}
Without any loss of generality we choose $|\lambda|=1$. As a result, the contour $\gamma_\rho(\varepsilon,\psi_{\rho\lambda},\delta_{1\rho},\delta_{2\rho})$ takes the form (see~Fig.~\ref{fig:loop_gammaRho_rhoPi_argZ})
\begin{equation}\label{eq:loop_gammaRho_argZ}
  \gamma_\rho(\varepsilon,\arg z,\delta_{1\rho},\delta_{2\rho})=
  \left\{\begin{array}{l}
    S_{1\rho}=\{u:\ \arg u=-\delta_{1\rho}+\arg z,\ |u|\geqslant\varepsilon^{1/\rho}\}, \\
    C_{\varepsilon\rho}=\{u:\ -\delta_{1\rho}+\arg z\leqslant\arg u\leqslant\delta_{2\rho}+\arg z,\  |u|=\varepsilon^{1/\rho}\},\\
    S_{2\rho}=\{u:\ \arg u= \delta_{2\rho}+\arg z,\ |u|\geqslant\varepsilon^{1/\rho}\},\\
    \end{array}\right.
\end{equation}
where $\tfrac{\pi}{2\rho}<\delta_{1\rho}\leqslant\min\left(\pi,\tfrac{\pi}{\rho}\right)$, $\tfrac{\pi}{2\rho}<\delta_{2\rho}\leqslant\min\left(\pi,\tfrac{\pi}{\rho}\right)$,
\begin{equation}\label{eq:z_cond}
    \tfrac{\pi}{2\rho}-\delta_{2\rho}+\pi< \arg z< -\tfrac{\pi}{2\rho}+\delta_{1\rho}+\pi.
\end{equation}
The formula (\ref{eq:MLF_tmp2}) will take the form
\begin{equation}\label{eq:MLF_tmp3}
  E_{\rho,\mu}(z)=\frac{\rho e^{-i\pi}}{2\pi i}\int_{\gamma_\rho(\varepsilon,\arg z,\delta_{1\rho},\delta_{2\rho})}\frac{\exp\left\{\left(u e^{-i\pi}\right)^{\rho}\right\} \left(ue^{-i\pi}\right)^{\rho(1-\mu)-1}}{1-\frac{z}{u}e^{i\pi}}du,\quad \lambda=e^{-i\rho\pi}.
\end{equation}
It should be noted that since we chose the value $\psi_{\rho\lambda}=\arg z$, then this automatically entailed the imposition on $\arg z$ the condition (\ref{eq:psi_rhoPi_cond}) which would lead to the condition (\ref{eq:z_cond}).

Now let us make a substitution in this expression of the variable of integration $u=\zeta ze^{i\pi}$. We have
\begin{equation*}
  \zeta=\frac{u}{z}e^{-i\pi}\quad\Rightarrow\quad |\zeta|=\frac{|u|}{|z|},\quad \arg\zeta=\arg u-\arg z-\pi.
\end{equation*}
 As a result of this substitution, the half-line $S_{1\rho}$ of the contour (\ref{eq:loop_gammaRho_argZ}) maps into the half-line $S_1=\{\zeta:\ \arg \zeta=-\delta_{1\rho}-\pi,\ |\zeta|\geqslant \varepsilon^{1/\rho}/|z|\}$ on the plane $\zeta$. It should be noted that with such a substitution of the integration variable, the condition (\ref{eq:uz_cond}) will take the form
\begin{equation*}
  \inf_{\zeta\in\gamma_\zeta(\epsilon,\arg z,\delta_{1\rho},\delta_{2\rho})}|\zeta|>1.
\end{equation*}
This means that the value $\varepsilon$ is chosen in such a way that $\varepsilon^{1/\rho}/|z|>1$. Introducing for convenience the notation \begin{equation}\label{eq:eps_|z|}
\varepsilon^{1/\rho}/|z|=1+\epsilon,\quad \epsilon>0
\end{equation}
we will obtain $S_1=\{\zeta:\ \arg \zeta=-\delta_{1\rho}-\pi,\ |\zeta|\geqslant 1+\epsilon\}$. Similarly, considering the mapping of the arc of the circle $C_{\varepsilon\rho}$ and half-line $S_{1\rho}$ from the complex plane $u$ into the complex plane $\zeta$ we get that the contour (\ref{eq:loop_gammaRho_argZ}) maps into the contour
\begin{equation*}
  \gamma_\zeta(\epsilon,\arg z,\delta_{1\rho},\delta_{2\rho})=
  \left\{\begin{array}{l}
    S_1=\{\zeta:\ \arg \zeta=-\delta_{1\rho}-\pi,\ |\zeta|\geqslant 1+\epsilon\}, \\
    C_\epsilon=\{\zeta:\ -\delta_{1\rho}-\pi \leqslant\arg \zeta \leqslant\delta_{2\rho}-\pi,\  |\zeta|=1+\epsilon\},\\
    S_2=\{\zeta:\ \arg \zeta= \delta_{2\rho}-\pi,\ |\zeta|\geqslant 1+\epsilon\},\\
    \end{array}\right.
\end{equation*}
where $\tfrac{\pi}{2\rho}<\delta_{1\rho}\leqslant\min\left(\pi,\tfrac{\pi}{\rho}\right)$, $\tfrac{\pi}{2\rho}<\delta_{2\rho}\leqslant\min\left(\pi,\tfrac{\pi}{\rho}\right)$, $\tfrac{\pi}{2\rho}-\delta_{2\rho}+\pi< \arg z< -\tfrac{\pi}{2\rho}+\delta_{1\rho}+\pi$.
And finally, making a substitution of the variable of integration in  (\ref{eq:MLF_tmp3}) we get
\begin{equation*}
  E_{\rho,\mu}(z)=\frac{\rho}{2\pi i}\int_{\gamma_\zeta(\epsilon,\arg z,\delta_{1\rho},\delta_{2\rho})}\frac{\exp\left\{\left(z\zeta\right)^{\rho}\right\} \left(z\zeta\right)^{\rho(1-\mu)}}{\zeta-1}d\zeta,
\end{equation*}
\begin{flushright}
  $\Box$
\end{flushright}
\end{proof}

\section{Conclusion}

As we noted in the Introduction, the main difference between the existing integral representations of the Mittag-Leffler function consists in the integration contour. Each subsequent representation took account of additional properties of the Hankel contour in the integral representation of the gamma function. In this paper, it has been shown that the change in the directions of the half-lines of the Hankel contour can be represented as the rotation of this contour in the complex plane (see lemma~\ref{lemma_GF}). This presentation is much more convenient to use, as it is more illustrative. However, the condition of lemma~\ref{lemma_GF} constrains the rotation angle by the condition (\ref{eq:psiCond}). The use of corollary~\ref{coroll:GF_lambda} allows one to expand this range to the entire complex plane. This corollary gives an opportunity us to interpret the rotation of the Hankel contour as the sum of two independent rotations: the rotation of the complex plane as a whole by an angle $\arg\lambda$ and rotation of the Hankel contour on this complex plane by an angle  $\psi_\lambda$.

This property of the Hankel contour turned out to be useful in deriving the integral representation of the function $E_{\rho,\mu}(z)$. It gave an opportunity to associate the position of the Hankel contour with the value $\arg z$. As a result, a modified form of the integral representation of the Mittag-Leffler function is obtained. The advantage of this integral representation is that its singular points have a fixed location on the complex plane.  As we can see from (\ref{eq:MLF_int}), this representation has two singular points $\zeta=1$ and $\zeta=0$. The point $\zeta=1$  is a pole of the first order and the point $\zeta=0$ depending on the parameter value $\rho$ can be either a pole, a branch point, or a regular point. The fact that the singular points have a fixed location somewhat simplifies further study of this integral representation. For example, in the work \cite{Saenko2020d}  when passing in the representation (\ref{eq:MLF_int}) from the contour integral to integrals over real variables, it was possible to find that the resulting representations of the Mittag-Leffler function can be written in two forms. The disadvantage of the representation (\ref{eq:MLF_int}) is that this representation is valid only for the range of values  $\arg z$, satisfying the condition (\ref{eq:z_cond_lemm}). This constraint appeared as a result of linking the integration contour with  $\arg z$ (see (\ref{eq:psiL=argZ})) and it somewhat restricts the applicability of the obtained representation. However, if we use  corollary~\ref{coroll:GF_lambda}, then it is possible to generalize the representation (\ref{eq:MLF_int}) so that it will be valid on the entire complex plane.

\section*{Acknowledgments}
This work was supported by the Russian Foundation for Basic Research. (grant \No~19-44-730005 and 20-07-00655).

The author thanks to M. Yu. Dudikov for translation the article into English.

\bibliographystyle{elsarticle-num}
\bibliography{d:/bibliography/library}

\end{document}